\let\mathcal\mathscr
\newcommand{\moins}{\mathbin{\fgebackslash}}
\theoremstyle{plain}
\newtheorem{prop}[equation]{\propname}
\newtheorem{theo}[equation]{\theoname}
\newtheorem{coro}[equation]{\coroname}
\newtheorem{lemm}[equation]{\lemmname}
\theoremstyle{definition}
\newtheorem{defi}[equation]{\definame}
\theoremstyle{remark}
\newtheorem{rema}[equation]{\remaname}
\newcommand{\mar}[1]{\marginpar{\tiny #1}}
\let\cal\mathcal
\let\goth\mathfrak
\def\Q{{\bf Q}} \def\Z{{\bf Z}}
\def\N{{\bf N}}
\def\O{{\cal O}}
\def\G{{\cal G}} 
\def\dual{{\boldsymbol *}}
\def\epsilon{\varepsilon}
\def\Bcris{{\mathbb B}_{{\rm cris}}} 
\def\Bdr{{\mathbb B}_{{\rm dR}}}
\def\bst{{\rm B}_{{\rm st}}}
\def\ainf{{\rm A}_{{\rm inf}}}
\def\bcris{{\rm B}_{{\rm cris}}} \def\acris{{\rm A}_{{\rm cris}}}
\def\bdr{{\rm B}_{{\rm dR}}}
\def\piqp{{{\bf P}^1}}
 \def\A{{\bf A}} \def\B{{\bf B}}
\def\rg{{\rm R}\Gamma}
\def\ocirc#1{\accentset{\circ}{#1}}
\def\wotimes{\,\widehat\otimes\,}
\newcommand{\Qp}{\mathbf{Q}_p}
\newcommand{\ovk}{\overline{K} }
 \newcommand{\holim}{\operatorname{holim} }
  \newcommand{\proeet}{\operatorname{pro\acute{e}t}  }
 \newcommand{\eet}{\operatorname{\acute{e}t} }
 \newcommand{\proet}{\operatorname{pro\acute{e}t} }
 \newcommand{\Spf}{\operatorname{Spf} }
 \newcommand{\can}{ \operatorname{can} }
\newcommand{\synt}{ \operatorname{syn} }
\newcommand{\dr}{\operatorname{dR} }
 \newcommand{\kker}{\operatorname{Ker} }
 \newcommand{\crr}{\operatorname{cr} }
 \newcommand{\kr}{^{\scriptscriptstyle\bullet}}
 \newcommand{\so}{{\mathcal O}}
 \newcommand{\sa}{{\mathcal{A}}}
 \newcommand{\sx}{{\mathcal{X}}}
 \newcommand{\sss}{{\mathcal{S}}}
\newcommand{\sd}{{\mathcal{D}}}
   \def\B{{\bf B}}
      \def\A{{\bf A}}
\def\invlim{\mathop{\vtop{\ialign{##\crcr$\hfill{\lim}\hfil$\crcr
\noalign{\kern1pt\nointerlineskip}\leftarrowfill\crcr\noalign
{\kern -3pt}}}}\limits}
\def\dirlim{\mathop{\vtop{\ialign{##\crcr$\hfill{\lim}\hfil$\crcr
\noalign{\kern1pt\nointerlineskip}\rightarrowfill\crcr\noalign
{\kern -3pt}}}}\limits}
\def\lomapr#1{\smash{\mathop{\relbar\joinrel\longrightarrow}\limits^{#1}}}
\def\epsilon{\varepsilon}
\let\mathcal\mathscr
\begin{document}
\title[Cohomology of the affine space]
{On the cohomology of the affine space}
\author{Pierre Colmez}
\address{C.N.R.S., IMJ-PRG, Sorbonne Universit\'e, 4 place Jussieu,
75005 Paris, France}
\email{pierre.colmez@imj-prg.fr}
\author{Wies{\l}awa Nizio{\l}}
\address{CNRS, UMPA, \'Ecole Normale Sup\'erieure de Lyon, 46 all\'ee d'Italie, 69007 Lyon, France}
\email{wieslawa.niziol@ens-lyon.fr}
\begin{abstract}
We compute the $p$-adic geometric pro-\'etale cohomology of the rigid analytic  affine space (in any dimension).
This cohomology is non-zero, contrary to the \'etale cohomology, and can be described by means
of differential forms.
\end{abstract}
\setcounter{tocdepth}{3} 

\maketitle
\section*{Introduction}
Let  $K$ be a complete discrete valuation field of characteristic $0$ with perfect residue field of positive characteristic $p$.
Let  $C$ be the completion of an algebraic closure   $\overline K$ of $K$.
We denote by  $\G_K$ the absolute Galois group  of $K$ (it is also the group of continuous automorphisms of 
 $C$ that fix $K$).

For $n\geq 1$, let $\A^n_K$ be   the rigid analytic   affine space over $K$ of dimension~$n$   
and $\A^n$ be its scalar extension to $C$.
Our main result is the following  theorem.
\begin{theo}\label{aff1}
For  $r\geq 1$, we have isomorphisms of  $\G_K$-Fr\'echet spaces
$$H^r_{\proet}(\A^n,\Q_p(r))\simeq \Omega^{r-1}(\A^n)/\kker d\simeq \Omega^r(\A^n)^{d=0},$$
where $\Omega$ denotes the sheaf of differentials.
\end{theo}
\begin{rema}\label{aff2}
{\rm (i)} The $p$-adic pro-\'etale cohomology behaves in a remarkably different 
way from other (more classical)  cohomologies.  For example, for $i\geq 1$, we have :

$\bullet$ $H^i_{\rm dR}(\A^n)=H^i_{\rm HK}(\A^n)=0$, where $H\kr_{\rm HK}$ is Hyodo-Kato
cohomology (see \cite{HK} for its definition),

$\bullet$ $H^i_{\eet}(\A^n,\Q_\ell)=H^i_{\proet}(\A^n,\Q_\ell)=0$, if $\ell\neq p$,

$\bullet$ $H^i_{\eet}(\A^n,\Q_p)=0$. (Cf.~\cite{Ber} or Remark~\ref{aff4}.)\\
We listed the $\ell\neq p$ and $\ell=p$ cases of \'etale cohomology separately because,
if $\ell\neq p$, the triviality of the cohomology of $\A^n$ is a consequence of the triviality 
of the cohomology of the closed ball
  (which explains why the pro-\'etale cohomology is also trivial),
but the $p$-adic \'etale cohomology of the ball is highly nontrivial.

{\rm (ii)}  Using overconvergent syntomic cohomology allows to prove
a more general result~\cite[Th.\,1.8]{CDN3}: if $X$ is 
a  Stein space over $K$ admitting a semistable model over the ring of integers of $K$,
 there exists an exact sequence
$$0\to \Omega^{r-1}(X)/{\rm Ker}\,d\to H^r_{\proet}(X,\Q_p(r))\to
(\bst^+\wotimes H^r_{\rm HK}(X))^{N=0,\varphi=p^r}\to 0.$$
 However making syntomic cohomology overconvergent 
is technically demanding and the 
simple proof below uses special features of the geometry of the affine space. 

{\rm (iii)} Another possible approach (cf.~\cite{leB}) is to compute 
the pro-\'etale cohomology of the relative fundamental exact sequence
$0\to \Q_p(r)\to \Bcris^{\varphi=p^r}\to \Bdr/F^r\to 0$. 
\end{rema} 

 Let $\ocirc{\B}^n$ be the open unit ball of dimension $n$. An adaptation of the proof 
of Theorem \ref{aff1} shows the following result:
\begin{theo}\label{aff11}
For  $r\geq 1$, we have isomorphisms of  $\G_K$-Fr\'echet spaces
$$H^r_{\proet}(\ocirc{\B}^n,\Q_p(r))\simeq \Omega^{r-1}(\ocirc{\B}^n)/\kker d\simeq
\Omega^r(\ocirc{\B}^n)^{d=0}.$$
\end{theo}

\subsubsection*{Acknowledgements}
We would like to thank the referee for a careful reading of the manuscript
and useful suggestions for improving the exposition. 

\section{Syntomic variations}
If $r=1$, one can give an elementary proof of Theorem 1 using Kummer theory, but it does not
seem very easy to extend this kind of methods to treat the case $r\geq 2$.
Instead we are going to use syntomic methods.

Recall that the \'etale-syntomic comparison theorem \cite{Ts,CN} reduces the computation of $p$-adic \'etale cohomology to that of syntomic cohomology\footnote{
The computations in \cite{CN} are done over  $K$ (or over its finite extensions), but working directly over  $C$ 
simplifies a lot the local arguments  because there is no need to change the Frobenius and the group
$\Gamma$ of loc.\,cit.~becomes commutative 
(hence so does its  Lie algebra, which makes the arguments using Koszul complexes
a lot simpler).}.  The latter is defined as a filtered Frobenius eigenspace of absolute crystalline 
cohomology (see \cite{FM} for a gentle introduction and \cite{Ts} for a more thorough treatment) 
and can be thought of as a higher dimensional version of the Fontaine-Lafaille functor. 
Its computation reduces to a computation of cohomology of complexes built from differential forms,
and hence is often doable. 

 More precisely, 
if $\sx$ is  a quasi-compact semistable $p$-adic formal scheme over  $\so_K$, then
the  Fontaine--Messing period map \cite{FM}
   \begin{equation}
    \label{comp1}
    \alpha^{FM}: \tau_{\leq r}\rg_{\synt}(\sx_{\so_C},\Z_p(r))\to \tau_{\leq r}\rg_{\eet}(\sx_{C},\Z_p(r))
    \end{equation}
 is a $p^N$-quasi-isomorphism\footnote{It means that the kernel and 
cokernel of the induced map on cohomology are
annihilated by~$p^N$.} for a constant $N=N(r)$.
 This generalizes easily to semistable $p$-adic formal schemes over $\so_C$: the rational \'etale and pro-\'etale cohomology of such schemes are computed 
 by the syntomic complexes $\rg_{\synt}(\sx_{\so_C},\Z_p(r))_{\Q}$ and $\rg_{\synt}(\sx_{\so_C},\Q_p(r))$, respectively, where  the latter complex is defined by taking $\rg_{\synt}(\sx_{\so_C},\Z_p(r))_\Q$ locally and then glueing.

The purpose of this section is to construct a particularly simple complex that, morally, computes
the syntomic, and hence (pro-)\'etale as well, cohomology of the (canonical formal model of the) 
affine space and the open ball, but does not use a model of the whole space, only of closed balls
of increasing radii.

\smallskip
\noindent{\it Period rings}. ---
Let $C^\flat$ be the tilt of $C$ and let 
$\acris\subset\bcris^+=\acris[\frac{1}{p}]\subset\bdr^+$ be the usual Fontaine's
rings.  

Let $\theta:\bdr^+\to C$ be the canonical projection (its restriction to $\acris$ induces
a projection
$\acris\to \so_C$), and let ${\rm F}\kr_\theta\bdr^+$ be the filtration
by the powers of ${\rm Ker}\,\theta$ and ${\rm F}\kr_\theta\acris$ be the induced filtration.
For  $j\in\Z$, let  ${\rm A}_j=\acris/{\rm F}^j_{\theta}$
(hence  ${\rm A}_j=0$ for  $j\leq 0$ and ${\rm A}_1=\so_C$). 

We choose a morphism of groups  $\alpha\mapsto p^{\alpha}$ from $\Q$ to $C^\dual$
compatible with the analogous morphism on  $\Z$.  We denote by  $\tilde p^\alpha$
the element  $(p^\alpha,p^{\alpha/p},p^{\alpha/p^2},\dots)$ of $C^\flat$ and by  $[\tilde p^\alpha]$ its Teichm\"uller lift in $\acris$.

\smallskip
\noindent{\it Closed balls}. ---
For $\alpha\in \Q_+$, let $$D_\alpha=\{z=(z_1,\dots,z_n),\ v_p(z_m)\geq -\alpha,\ 
 \text{{for $1\leq m\leq n$}}\}$$ be the closed ball of valuation $-\alpha$ in $\A^n$,
and denote by  $\O(D_\alpha)$ (resp.~$\O^+(D_\alpha)$) the ring of analytic functions
 (resp.~analytic functions with integral values) on $D_\alpha$.
 We have
$$\O(D_\alpha)=C\langle p^\alpha T_1,\dots,p^\alpha T_n\rangle
\quad{\rm and}\quad
\O^+(D_\alpha)=\O_C\langle p^\alpha T_1,\dots,p^\alpha T_n\rangle.$$
Consider the lifts
$$ R_{\alpha}^+=\acris\langle [\tilde p^\alpha] T_1,\dots,[\tilde p^\alpha] T_n\rangle
\quad{\rm and}\quad
 R_{\alpha}=R_\alpha^+[\tfrac{1}{p}]$$
of $\O^+(D_\alpha)$ and $\O(D_\alpha)$, respectively. 
We extend  $\varphi$ on $\acris$ to $\varphi:R_{\alpha}\to R_{\alpha}$ by
setting  $\varphi(T_m)=T_m^p$, for  $1\leq m\leq n$.

\begin{defi}\label{AFF2}
Let $r\geq 0$. If $\alpha\in\Q_+$ and $\Lambda=R_\alpha,R_\alpha^+$, we define the complexes
$${\rm Syn}(\Lambda,r)  :=[{\rm HK}_r(\Lambda)\to {\rm DR}_r(\Lambda)],
$$
where the brackets $[\cdots]$ denote the mapping fiber, and\footnote{
The differentials are taken relative to $\acris$.}
\begin{align*}
{\rm HK}_r(\Lambda)& :=[\Omega\kr_{\Lambda}\lomapr{\varphi-p^r}\Omega\kr_{\Lambda}],\\
F^r\Omega\kr_{\Lambda}
& :=(F^r_\theta \Lambda\to F^{r-1}_\theta \Omega^1_{\Lambda}\to
F^{r-2}_\theta \Omega^2_{\Lambda}\to\cdots),\\
{\rm DR}_r(\Lambda) & :=\Omega\kr_{\Lambda}/F^r=
  (\xymatrix{\dots\to {\rm A}_{r-i}\otimes_{\acris}\Omega^i_{\Lambda}\ar[r]^-{1\otimes d_i}
&{\rm A}_{r-i-1}\otimes_{\acris}\Omega^{i+1}_{\Lambda}\to\cdots}).
\end{align*}
\end{defi}

\noindent{\it The complex ${\rm Syn}(\A^n,r)$}. ---
  The above complexes  for varying $\alpha$
are closely linked:

$\bullet$ The ring morphism $R_{0}\to R_{\alpha}$, 
$T_m\to[\tilde p^\alpha]T_m$, for  $1\leq m\leq n$,
induces an isomorphism of complexes ${\rm Syn}(R_0,r)\overset{\sim}{\to}{\rm Syn}(R_\alpha,r)$.

$\bullet$ For $\beta\geq\alpha$, the inclusion
$\iota_{\beta,\alpha}: R_{\beta}\hookrightarrow R_{\alpha}$ 
induces a morphism of complexes ${\rm Syn}(R_\beta,r){\to}{\rm Syn}(R_\alpha,r)$
thanks to the fact that $\varphi([\tilde p^s])=[\tilde p^s]^p$, for all $s\in\Q_+$.
 
\noindent (We have analogous statements replacing 
$R_\alpha$ by $R_\alpha^+$.)

The first point comes just from the fact that two closed balls are isomorphic, but the
second point, to the effect that we can find liftings of the $\O(D_\alpha)$'s with compatible
Frobenius, is a bit  of a miracle, and will simplify greatly the computation of the
syntomic cohomology of $\A^n$. In particular,
it makes it possible to define the complex
${\rm Syn}(\A^n,r):=\holim_{\alpha}{\rm Syn}(R_\alpha,r)$ and, similarly,
 ${\rm HK}_r(\A^n)$ and ${\rm DR}_r(\A^n)$. 

For $i\geq 0$ and $X=\A^n,R_\alpha,R_\alpha^+$, denote by  ${\rm HK}_r^i(X)$, 
${\rm DR}_r^i(X)$, and ${\rm Syn}^i(X,r)$ the cohomology groups of the corresponding complexes.
We have a long exact sequence:
$$\cdots\to{\rm DR}_r^{i-1}(X)\to {\rm Syn}^i(X,r)\to {\rm HK}^i_r(X)\to{\rm DR}_r^i(X)\to\cdots$$
\begin{prop}\label{aff3}
If $i\leq r$, we have natural isomorphisms:

$\bullet$ $H^i_{\eet}(D_{\alpha},\Q_p(r))\cong {\rm Syn}^i(R_\alpha,r)$, if $\alpha\in\Q_+$.

$\bullet$ $H^i_{\proet}(\A^n,\Q_p(r))\cong {\rm Syn}^i(\A^n,r)$.
\end{prop}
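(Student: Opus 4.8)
The strategy is to deduce both isomorphisms from the étale-syntomic comparison theorem \eqref{comp1}, passing from integral to rational coefficients and then taking homotopy limits over $\alpha$. The key input is that the Fontaine–Messing period map is a $p^N$-isomorphism in the range $i\leq r$, so that after inverting $p$ it becomes a genuine isomorphism. I would therefore first identify the complex ${\rm Syn}(R_\alpha,r)$ (resp.~${\rm Syn}(R^+_\alpha,r)$) with the rational syntomic cohomology $\rg_{\synt}(\sx_\alpha,\Q_p(r))$ of a suitable semistable $p$-adic formal model $\sx_\alpha$ of $D_\alpha$ over $\so_C$. Here $R^+_\alpha$ is the lift of $\so^+(D_\alpha)$ introduced above, and the point is that the de Rham and Hyodo–Kato pieces ${\rm DR}_r(R^+_\alpha)$ and ${\rm HK}_r(R^+_\alpha)$ of Definition \ref{AFF2} are, by construction, the mod-$F^r$ de Rham complex and the $[\varphi-p^r]$-Frobenius complex computing exactly the two terms of the syntomic complex for the ball. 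The differentials being taken relative to $\acris$, and $R_\alpha$ being an explicit Tate-algebra lift with compatible Frobenius $\varphi(T_m)=T_m^p$, this identification is essentially a matter of unwinding the definitions.

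For the first bullet, once ${\rm Syn}(R_\alpha,r)\simeq\rg_{\synt}(\sx_\alpha,\Q_p(r))$ is in hand, I would apply \eqref{comp1} tensored with $\Q_p$: in the stated range $i\leq r$ the truncated period map $\alpha^{FM}$ induces an isomorphism on cohomology, giving ${\rm Syn}^i(R_\alpha,r)\cong H^i_{\eet}(D_\alpha,\Q_p(r))$. One small subtlety is that $D_\alpha$ is the rigid-analytic closed ball while \eqref{comp1} is stated for the formal scheme $\sx_\alpha$; I would invoke the generalization to formal schemes over $\so_C$ recalled just before Definition \ref{AFF2}, namely that rational étale cohomology of the generic fiber is computed by $\rg_{\synt}(\sx,\Z_p(r))_\Q$, and that this matches the analytic object $R_\alpha=R^+_\alpha[\tfrac1p]$.

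For the second bullet, I would take the homotopy limit over $\alpha$ of both sides. By definition ${\rm Syn}(\A^n,r)=\holim_\alpha{\rm Syn}(R_\alpha,r)$, and on the geometric side the pro-étale cohomology of the Stein space $\A^n=\bigcup_\alpha D_\alpha$ is computed as the homotopy limit of the étale cohomologies of the exhausting closed balls $D_\alpha$, the transition maps being induced by the inclusions $\iota_{\beta,\alpha}\colon R_\beta\hookrightarrow R_\alpha$. The isomorphisms from the first bullet are compatible with these transition maps (this compatibility comes from functoriality of $\alpha^{FM}$), so passing to $\holim$ yields ${\rm Syn}^i(\A^n,r)\cong H^i_{\proet}(\A^n,\Q_p(r))$ in the range $i\leq r$. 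Because all the complexes involved sit in a fixed bounded range of degrees, $\holim$ is computed by an $\invlim$–$R^1\invlim$ short exact sequence, but one still needs the $\holim$ on the cohomology level to match.

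The main obstacle I expect is precisely this last commutation of $\holim$ with the taking of cohomology, together with controlling the $p^N$-torsion uniformly in $\alpha$. The constant $N=N(r)$ in \eqref{comp1} is independent of the scheme, which is essential: it guarantees that the kernels and cokernels of the integral period maps are killed by a single $p^N$ for every ball $D_\alpha$ simultaneously, so that after inverting $p$ the isomorphisms are uniform and the transition maps are strictly compatible. Granting this uniformity, the $R^1\invlim$ terms behave well and the $\holim$ passes to cohomology, so the bulk of the work is bookkeeping the comparison theorem across the tower rather than any genuinely new geometric input.
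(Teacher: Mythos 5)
Your proposal is correct and follows essentially the same route as the paper: identify ${\rm Syn}(R_\alpha,r)$ with the rational log-syntomic cohomology of the formal model $\Spf\so^+(D_\alpha)$ via the log-smooth Frobenius-compatible lifting $\Spf R^+_\alpha$ over $\Spf\acris$, apply the Fontaine--Messing comparison in degrees $\leq r$, and pass to the homotopy limit over the admissible covering of $\A^n$ by the closed balls $D_k$, using the compatibility of the liftings for $\beta\geq\alpha$ and the uniformity of the constant $N(r)$.
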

\begin{proof}
 Take 
$\alpha\in\Q_+$.  By the comparison isomorphism (\ref{comp1}), to prove the first claim, it suffices to show that 
the complex  ${\rm Syn}(R_\alpha,r)$ computes  the rational geometric log-syntomic
cohomology of  $\sd_\alpha:=\Spf \O^+(D_\alpha)$, the formal affine space over $\so_C$,  that is a smooth formal model of $D_\alpha$.  
To do this, recall that the latter cohomology 
is computed by the complex
$$
  \rg_{\synt}(\sd_\alpha,\Z_p(r))_\Q
  =[\rg_{\crr}(\sd_{\alpha}/\acris)_{\Q}^{\varphi=p^r}\to
  \rg_{\crr}(\sd_{\alpha}/\acris)_{\Q}/F^r],
$$
where $\acris$ is equipped with the unique  log-structure extending the canonical log-structure on $\so_C/p$. It suffices thus to show that there exists  a quasi-isomorphism 
$\rg_{\crr}(\sd_{\alpha}/\acris)_{\Q}\simeq 
\Omega\kr_{R_{\alpha}}$
that is compatible with the Frobenius\footnote{Recall that  the Frobenius on crystalline cohomology is defined via the isomorphism 
$\rg_{\crr}(\sd_{\alpha}/\acris)_{\Q}\stackrel{\sim}{\to} \rg_{\crr}((\sd_{\alpha,}/p)/\acris)_{\Q} $ from the canonical Frobenius on the second term.} and the filtration. But this is clear since $\Spf R^+_\alpha$ is a log-smooth lifting of $\sd_{\alpha}$ from $\Spf \so_{C}$ to $\Spf \acris$
that is compatible with the  Frobenius on $\acris$ and $\so^+(D_{\alpha})/p$.

To show  the second claim, we note that, for $\beta\geq \alpha$, there is a natural map (an injection) of liftings
$(R^+_{\beta}\to  \so^+(D_{\beta}))\to 
(R^+_{\alpha}\to  \so^+(D_{\alpha})). $ This allows us to use the comparison isomorphism (\ref{comp1}) to define the second quasi-isomorphism in the sequence of maps
\begin{align*}
\tau_{\leq r}\rg_{\proeet}(\A^n,\Q_p(r)) & \simeq\tau_{\leq r}\holim_k\rg_{\eet}(D_{k},\Q_p(r))\simeq\tau_{\leq r}\holim_k\rg_{\synt}(\sd_{k},\Z_p(r))_{\Q}\\
 & \simeq \tau_{\leq r}\holim_k{\rm Syn}(R_k,r)=\tau_{\leq r}{\rm Syn}(\A^n,r).
\end{align*}
Here, the first quasi-isomorphism follows from the fact that $\{D_k\}_{k\in\N}$ is an admissible affinoid covering of $\A^n$ and  the third one follows from the first claim. This finishes the proof.
\end{proof}

\section{Computation of ${\rm HK}^i_r(\A^n)$}
The group ${\rm HK}^i_r(\A^n)$ is, by construction, obtained from the ${\rm HK}^i_r(R_\alpha)$'s, but
the latter are, individually, hard to compute and quite nasty: for example, 
${\rm HK}^1_1(R_\alpha)$ is isomorphic to the quotient
of $\Qp\widehat\otimes \O(D_\alpha)^\dual$ by the sub $\Q_p$-vector 
space generated by $\O(D_\alpha)^\dual$;
hence it is an infinite dimensionnal topological $\Q_p$-vector space in which $0$ is dense.
Fortunately Lemma~\ref{AFF3} below shows that this is not a problem for the computation
of ${\rm HK}^i_r(\A^n)$.

\smallskip
For ${\bf k}=(k_1,\dots,k_n)\in\N^n$, we set  $|{\bf k}|=k_1+\cdots+k_n$ and  $T^{\bf k}=T_1^{k_1}\cdots T_n^{k_n}$.
For $1\leq j\leq n$, let $\omega_j$ be the differential form  $\frac{dT_j}{T_j}$, and  let $\partial_j$ be
differential operator defined by  $df=\sum_{j=1}^n\partial_jf\,\omega_j$.
For  ${\bf j}=\{j_1,\dots,j_i\}$, with $j_1\leq j_2\leq\cdots\leq j_i$, we set 
$\omega_{\bf j}=\omega_{j_1}\wedge \cdots\wedge \omega_{j_i}$. All elements 
 $\eta$ of $\Omega^i_{R_\alpha}$ can be written, in a unique way, in the form 
  $\sum_{|{\bf j}|=i}a_{\bf j}\omega_{\bf j}$, where  $a_{\bf j}\in \big(\prod_{j\in {\bf j}}T_j\big)R_\alpha$.

\begin{lemm}\label{aff5}
Let $M$ be a sub-$\Z_p$-module of  $\acris$ or $\O_C$.
Let $i\geq 1$ and ${\bf k}\in\N^n$.  
For  $\omega=T^{\bf k}\sum_{|{\bf j}|=i}a_{\bf j}\omega_{\bf j}$, with  $a_{\bf j}\in M$,
such that  $d\omega=0$, there exists  $\eta=T^{\bf k}\sum_{|{\bf j}|=i-1}b_{\bf j}\omega_{\bf j}$, such that 
$d\eta=\omega$ and  $b_{\bf j}\in p^{-N({\bf k})}M$, with $N({\bf k})=\inf_{j\in {\bf j}}v_p(k_j)$.
\end{lemm}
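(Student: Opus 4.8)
The plan is to strip off the monomial $T^{\bf k}$, recognize what is left as (the exactness of) a Koszul complex, and then keep track of the single denominator introduced by the contracting homotopy. First I would compute the differential of a monomial form: each $\omega_j=dT_j/T_j$ is closed and $d(T^{\bf k})=\big(\sum_{l=1}^n k_l\,\omega_l\big)T^{\bf k}$ (since $\partial_l T^{\bf k}=k_l T^{\bf k}$), so for any scalar-coefficient form $\alpha=\sum_{|{\bf j}|=i}a_{\bf j}\omega_{\bf j}$ with $a_{\bf j}\in M$ (recall $da_{\bf j}=0$, the differentials being relative to $\acris$) one has
$$d\big(T^{\bf k}\alpha\big)=T^{\bf k}\,(\kappa\wedge\alpha),\qquad \kappa:=\sum_{l=1}^n k_l\,\omega_l.$$
Since $d$ preserves the $T^{\bf k}$-isotypic part and $\alpha\mapsto T^{\bf k}\alpha$ is injective, the hypothesis $d\omega=0$ is equivalent to $\kappa\wedge\alpha=0$, and producing $\eta=T^{\bf k}\beta$ with $d\eta=\omega$ amounts to solving $\kappa\wedge\beta=\alpha$ in the exterior algebra on $\omega_1,\dots,\omega_n$. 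Everything is thus governed by the Koszul complex $(\bigwedge^{\bullet},\ \kappa\wedge-)$ attached to $(k_1,\dots,k_n)$, exact as soon as this vector is nonzero. Note that the $a_{\bf j}$ are constants whereas the coefficient $a_{\bf j}T^{\bf k}$ of $\omega_{\bf j}$ in a regular form must be divisible by $\prod_{j\in{\bf j}}T_j$, so a nonzero $\omega$ forces $k_j\geq1$ for $j$ in its support; in particular we may assume ${\bf k}\neq{\bf 0}$.

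Next I would exhibit the homotopy explicitly. Pick an index $j_0$ with $k_{j_0}\neq0$ realizing the minimum $v_p(k_{j_0})=N({\bf k})$, and let $s$ be the degree $-1$ operator (interior product against $k_{j_0}^{-1}$ times the functional dual to $\omega_{j_0}$) sending $\omega_{\bf j}$ to $\pm k_{j_0}^{-1}\omega_{{\bf j}\setminus\{j_0\}}$ (standard Koszul sign) when $j_0\in{\bf j}$, and to $0$ otherwise. The Cartan-type identity $s(\kappa\wedge\gamma)+\kappa\wedge s(\gamma)=\gamma$ holds for every $\gamma$, the scalar being $k_{j_0}^{-1}\cdot k_{j_0}=1$; so setting $\beta:=s(\alpha)$ and using $\kappa\wedge\alpha=0$ gives $\kappa\wedge\beta=\alpha$, whence $\eta:=T^{\bf k}\beta$ satisfies $d\eta=\omega$. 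Concretely $\beta=k_{j_0}^{-1}\sum_{{\bf j}\ni j_0}\pm a_{\bf j}\,\omega_{{\bf j}\setminus\{j_0\}}$, so each new coefficient $b_{\bf j}$ equals $\pm a_{\bf j}/k_{j_0}$.

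Finally I would read off the size of the denominators. Writing $k_{j_0}=p^{N({\bf k})}u$ with $u\in\Z_p^\times$, one has $k_{j_0}^{-1}=p^{-N({\bf k})}u^{-1}$; since $M$ is a $\Z_p$-module, $u^{-1}M=M$, and therefore $b_{\bf j}=\pm a_{\bf j}/k_{j_0}\in p^{-N({\bf k})}M$, as required. The one place needing care is the choice of $j_0$: taking the coordinate of least valuation is exactly what pins the denominator at $p^{-N({\bf k})}$, and the Cartan identity guarantees that contracting in this single direction already reconstructs all of $\alpha$ (in particular $j_0$ automatically lies in the support of any nonzero closed $\alpha$, so $\beta\neq0$). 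Regularity of $\eta$ is then automatic, as each ${\bf j}\setminus\{j_0\}$ occurring in $\beta$ is contained in an original ${\bf j}$, for which $k_j\geq1$. I expect the only genuine subtlety to be this denominator bookkeeping together with the sign convention in $s$; the homotopy identity itself is formal.
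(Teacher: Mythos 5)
Your proof is correct and is essentially the paper's argument: both single out the coordinate $j_0$ (the paper's $T_1$ after permutation) realizing the minimal valuation $N({\bf k})$, contract $\omega$ against it, and divide by $k_{j_0}$, so the resulting $\eta$ and the denominator bookkeeping ($k_{j_0}^{-1}\in p^{-N({\bf k})}\Z_p^\times$, hence $b_{\bf j}\in p^{-N({\bf k})}M$) are identical. The only difference is presentational: your Cartan--Koszul identity $s(\kappa\wedge\gamma)+\kappa\wedge s(\gamma)=\gamma$ replaces the paper's hands-on verification that the remainder $\omega-d\eta$, having no $\omega_{j_0}$-component and being closed, must vanish.
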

\begin{proof}
Permuting the $T_m$'s,
we can assume that  $v_p(k_1)\leq v_p(k_2)\leq \cdots\leq v_p(k_n)$; in particular, $k_1\neq 0$.
Decompose $\omega$ as $\big(\omega_1\wedge
T^{\bf k}\sum_{1\in {\bf j}}a_{\bf j}\omega_{{\bf j}\moins\{1\}}\big)+\omega'$, and
set $\eta=\frac{1}{k_1}T^{\bf k}\sum_{1\in {\bf j}}a_{\bf j}\omega_{{\bf j}\moins\{1\}}$; we have  $\omega-d\eta=
T^{\bf k}\sum_{1\notin {\bf j}}c_{\bf j}\omega_{\bf j}$ and it has a trivial differential.
But  $d(T^{\bf k}\sum_{1\notin {\bf j}}c_{\bf j}\omega_{\bf j})=k_1T^{\bf k}\sum_{1\notin {\bf j}}c_{\bf j}\omega_{\{1\}\cup {\bf j}}
+\sum_{1\notin I}c'_I\omega_I$, hence $c_{\bf j}=0$ for all  ${\bf j}$, which proves that $d\eta=\omega$ and allows us to conclude.
\end{proof}

\begin{lemm}\label{AFF3}
Let $\alpha\in\Q_+$ and let $\Lambda_\alpha=R^+_\alpha,\so^+(D_\alpha)$.  Then 
$ H^0_{\rm dR}(\Lambda_{\alpha})=\acris,\so_C$ and ${\rm HK}_r^0(R^+_{\alpha})=\acris^{\varphi=p^r}$,
the natural maps 
$$H^i_{\rm dR}(\Lambda_{\alpha+1})\to H^i_{\rm dR}(\Lambda_{\alpha}),\quad i\geq 1;\quad  {\rm HK}_r^i(R^+_{\alpha+2})\to {\rm HK}_r^i(R^+_{\alpha}),\quad i\geq2.$$
are identically zero,
and the image of the map 
$ {\rm HK}_r^1(R^+_{\alpha+2})\to {\rm HK}_r^1(R^+_{\alpha})$ is annihilated by $p^r$. 
\end{lemm}
\begin{proof}
The computation of the $H^0$'s is straightforward.
The proof for the first map is similar  (but easier) to that of the second one, so we are only going to prove the latter. Take $i\geq 2$. 
Let $(\omega^i,\omega^{i-1})$ be a representative of an element of ${\rm HK}_r^i(R^+_{\alpha+2})$.
That is to say $\omega^i\in\Omega^i_{R^+_{\alpha+2}}$,
$\omega^{i-1}\in\Omega^{i-1}_{R^+_{\alpha+2}}$,
$d\omega^i=0$ and $d\omega^{i-1}+(\varphi-p^r)\omega^i=0$.

Since $d\omega^i=0$, we deduce from Lemma~\ref{aff5}
 that there exists $\eta^{i-1}\in \Omega^{i-1}_{R^+_{\alpha+1}}$ such that
$\iota_{\alpha+2,\alpha+1}\omega^i=d \eta^{i-1}$ 
(we used here that $\frac{1}{m}[\tilde{p}]^m\in \acris$).
Let
 $\omega^{i-1}_1=
\iota_{\alpha+2,\alpha+1}\omega^{i-1}+(\varphi-p^r)\eta^{i-1}$.
Then $d\omega^{i-1}_1=
\iota_{\alpha+2,\alpha+1} d\omega^{i-1}+(\varphi-p^r)d\eta^{i-1}=0$;
hence there exists $\eta^{i-2}\in \Omega^{i-2}_{R^+_\alpha}$ such that
$\iota_{\alpha+1,\alpha}\omega^{i-1}_1=d \eta^{i-2}$.
It follows that
$\iota_{\alpha+2,\alpha}(\omega^i,\omega^{i-1})=d
(\iota_{\alpha+1,\alpha}\eta^{i-1},\eta^{i-2})$, as wanted.

  Take now $i=1$ and use the notation from the above computation. Arguing as above we show that $(\omega^1,\omega^{0})$ is in the same class as $(0,\omega^{0})$, with
  $\omega^{0}\in \acris$. But the map $\varphi-p^r: \acris\to\acris$ is $p^r$-surjective. 
This proves the last statement of the lemma. 
  \end{proof}
\begin{rema}\label{AFF3.1}
{\rm (i)}
The same arguments would prove that there exists $N:\Q_+^\dual\to \N$ such that, 
if $\beta>\alpha$ and 
$i\geq 1$, 
the images of the natural maps $H^i_{\rm dR}(R^+_{\beta})\to H^i_{\rm dR}(R^+_{\alpha})$, 
${\rm HK}_r^i(R^+_{\beta})\to {\rm HK}_r^i(R^+_{\alpha})$ are killed by $p^{N(\beta-\alpha)}$.
This is sufficient to extend Corollary~\ref{AFF4} and Lemma~\ref{AFF5} to the unit ball
$\ocirc{\bf B}^n$. 

{\rm (ii)}
Note, however, that $N(u)\to +\infty$ when $u\to 0^+$. This prevents the extension of
Lemma~\ref{AFF5} to the integral de Rham cohomology of $\ocirc{\bf B}^n$ which is
good since this integral de Rham cohomology, in degrees $1\leq i\leq n$, is far from $0$
(but its $p$-torsion is dense).

\end{rema}

\begin{coro}\label{AFF4}
If  $i\geq 1$ then  ${\rm HK}_r^i(\A^n)=0$.
\end{coro}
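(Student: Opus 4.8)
The plan is to read off ${\rm HK}^i_r(\A^n)$ from the groups ${\rm HK}^i_r(R_k)$ via the defining homotopy limit ${\rm HK}_r(\A^n)=\holim_\alpha {\rm HK}_r(R_\alpha)$, which I would evaluate over the cofinal countable tower $\{R_k\}_{k\in\N}$ whose transition maps are the restrictions $\iota_{k+1,k}$. For a tower indexed by $\N$ the homotopy limit fits into the Milnor exact sequence
\[
0\to {\invlim}^1_k\, {\rm HK}^{i-1}_r(R_k)\to {\rm HK}^i_r(\A^n)\to \invlim_k {\rm HK}^i_r(R_k)\to 0 ,
\]
so it suffices to show that, for $i\geq 1$, both outer terms vanish. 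All of this will be driven by the vanishing of transition maps provided by Lemma~\ref{AFF3}.

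The first step is to transfer Lemma~\ref{AFF3}, stated for the integral rings $R^+_k$, to the rings $R_k=R^+_k[1/p]$ occurring in ${\rm Syn}(\A^n,r)$. Since $\Omega^\bullet_{R_k}=\Omega^\bullet_{R^+_k}[1/p]$ and the operators $d$ and $\varphi-p^r$ are the localizations of the integral ones, and since inverting $p$ is exact, one gets ${\rm HK}^i_r(R_k)={\rm HK}^i_r(R^+_k)[1/p]$ compatibly with $\iota_{k+2,k}$. For $i\geq 2$ the map ${\rm HK}^i_r(R_{k+2})\to {\rm HK}^i_r(R_k)$ is thus the localization of a zero map, hence zero; for $i=1$ its image is the localization of a group killed by $p^r$, hence zero once $p$ is inverted. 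So $\iota_{k+2,k}=0$ on ${\rm HK}^i_r(R_k)$ for every $i\geq 1$. In degree $0$, by contrast, ${\rm HK}^0_r(R_k)=\acris^{\varphi=p^r}[1/p]$ with identity transition maps, so $\{{\rm HK}^0_r(R_k)\}_k$ is a constant tower.

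I would then feed this into the Milnor sequence. For the right-hand term with $i\geq 1$: a compatible system $(x_k)$ with $x_k=\iota_{k+1,k}(x_{k+1})$ satisfies $x_k=\iota_{k+2,k}(x_{k+2})=0$, so $\invlim_k {\rm HK}^i_r(R_k)=0$. For the left-hand term: when $i\geq 2$ the tower $\{{\rm HK}^{i-1}_r(R_k)\}_k$ has $\iota_{k+2,k}=0$, so its transition images stabilize at $0$ and it is Mittag-Leffler, whence ${\invlim}^1_k {\rm HK}^{i-1}_r(R_k)=0$; when $i=1$ the tower $\{{\rm HK}^0_r(R_k)\}_k$ is constant, so it too is Mittag-Leffler and ${\invlim}^1_k {\rm HK}^0_r(R_k)=0$. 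Both outer terms vanish for $i\geq 1$, giving ${\rm HK}^i_r(\A^n)=0$.

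The real content sits in Lemma~\ref{AFF3}; once the transition maps vanish the homological bookkeeping is routine. The one step needing care is the borderline degree $i=1$, where the integral map is merely $p^r$-torsion, so inverting $p$ is essential, and where one must separately observe that ${\rm HK}^0_r(R_k)$ forms a constant (not eventually-zero) tower whose ${\invlim}^1$ still vanishes by Mittag-Leffler. The only remaining technical obligation is to know that the Milnor sequence is available for these Fr\'echet complexes, which is standard for homotopy limits of towers.
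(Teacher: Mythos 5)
Your proof is correct and follows the paper's own route: the paper likewise deduces the corollary ``immediately'' from Lemma~\ref{AFF3} and the Milnor exact sequence $0\to {\rm R}^1\invlim_k {\rm HK}_r^{i-1}(R_{k})\to {\rm HK}_r^i(\A^n)\to \invlim_k {\rm HK}_r^{i}(R_{k})\to 0$. Your explicit handling of the passage from $R_k^+$ to $R_k$ and of the borderline cases $i=1$ (torsion killed by inverting $p$; constant tower in degree $0$) simply fills in details the paper leaves implicit.
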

\begin{proof}Immediate from Lemma \ref{AFF3} and 
 the exact sequence
$$0\to {\rm R}^1\invlim_k {\rm HK}_r^{i-1}(R_{k})\to
{\rm HK}_r^i(\A^n)\to \invlim_k {\rm HK}_r^{i}(R_{k})\to
0
\qedhere
$$
\end{proof}
\section{Computation of ${\rm DR}_r^{i}(\A^n)$}

\begin{lemm}\label{AFF5}
If $1\leq i\leq r-1$ then ${\rm DR}_r^i(\A^n)\simeq \big(\Omega^{i}(\A^{{n}})/\kker d\big)(r-i-1)$,   if $i\geq r$ then ${\rm DR}_r^i(\A^n)=0$, and, if $r>0$, we have an exact sequence
$$0\to {\rm B}^+_{\rm cris}/F^r_{\theta}\to {\rm DR}_r^0(\A^n)\to  \big(\so(\A^{{n}})/C\big)(r-1)\to 0$$
\end{lemm}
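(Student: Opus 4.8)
The plan is to filter $\mathrm{DR}_r(R_\alpha)$ by the powers of $t$, i.e.\ by the decreasing $\G_K$-stable filtration $F^\bullet_\theta$ induced from $\acris$, and to analyse the resulting spectral sequence after passing to $\holim_\alpha$. Since ${\gr}^s_\theta\acris\simeq\so_C(s)$ and ${\rm A}_{r-i}=\acris/F^{r-i}_\theta$ carries exactly the steps $s=0,\dots,r-i-1$, the degree-$i$ term of ${\gr}^s_F\mathrm{DR}_r(R_\alpha)$ is ${\gr}^s_\theta({\rm A}_{r-i})\otimes_{\acris}\Omega^i_{R_\alpha}$, which vanishes for $i>r-1-s$; rationally and up to the twist $(s)$ the graded piece is therefore the brutally truncated de Rham complex of the ball
$$[\,\Omega^0(D_\alpha)\to\Omega^1(D_\alpha)\to\cdots\to\Omega^{r-1-s}(D_\alpha)\,](s),$$
the differential being the de Rham one. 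As this filtration is finite and compatible with the $\iota_{\beta,\alpha}$, I may pass to $\holim_\alpha$ one graded piece at a time.

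The key point is that brutal truncation commutes with $\holim_\alpha$ and that $\holim_\alpha\Omega^j(D_\alpha)=\Omega^j(\A^n)$ with vanishing ${\rm R}^1\invlim_\alpha$: the restriction maps $\Omega^j(D_\beta)\to\Omega^j(D_\alpha)$ have dense image, so the system is Mittag--Leffler (this is the mechanism underlying Lemma~\ref{AFF3}). Thus $\holim_\alpha{\gr}^s_F\mathrm{DR}_r$ is the brutal truncation in degrees $\le r-1-s$ of the \emph{genuine} de Rham complex $\Omega\kr(\A^n)$, twisted by $(s)$, and its cohomology is read off from $H^i_{\rm dR}(\A^n)$. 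By Remark~\ref{aff2} the latter vanishes for $i\ge1$ and equals $C$ for $i=0$, so the only nonzero cohomology of this graded piece is $C(s)$ in degree $0$ --- replaced by $\O(\A^n)(r-1)$ when $s=r-1$, where only $\Omega^0$ survives --- together with $(\Omega^{r-1-s}(\A^n)/\kker d)(s)$ in the top degree $r-1-s$ (here $\kker d=d\,\Omega^{r-2-s}(\A^n)$, again because $H^{r-1-s}_{\rm dR}(\A^n)=0$).

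Feeding this into the (finite, hence convergent) spectral sequence of the filtration gives the three cases. In total degree $i$ with $1\le i\le r-1$ there is a single nonzero entry, at filtration $s=r-1-i$, equal to $(\Omega^i(\A^n)/\kker d)(r-i-1)$; in total degree $i\ge r$ every graded piece vanishes, so $\mathrm{DR}^i_r(\A^n)=0$; and in total degree $0$ the graded pieces are $C(0),\dots,C(r-2)$ and $\O(\A^n)(r-1)$. For the degree-$0$ exact sequence I would argue directly from the monomial decomposition $\mathrm{DR}_r(\A^n)=\widehat\bigoplus_{\bf k}\mathrm{DR}_r(\A^n)_{\bf k}$, which is respected by $d$: the part ${\bf k}=0$ is concentrated in degree $0$ and contributes the subobject $\bcris^+/F^r_\theta$ (the constants), while for ${\bf k}\ne0$ Lemma~\ref{aff5} forces any degree-$0$ cocycle $c\,T^{\bf k}$ to have $c$ in the top graded piece ${\gr}^{r-1}_\theta\simeq C(r-1)$, so the non-constant cocycles assemble into $(\O(\A^n)/C)(r-1)$. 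This yields $0\to\bcris^+/F^r_\theta\to\mathrm{DR}^0_r(\A^n)\to(\O(\A^n)/C)(r-1)\to0$, compatibly with the filtration (whose top piece $\O(\A^n)(r-1)$ splits off the constant line $C(r-1)$).

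The main obstacle is the degeneration of the spectral sequence. Among the top-degree pieces (total degrees $\ge1$) each potential target sits in strictly smaller filtration than its source, so those differentials vanish for index reasons; the only surviving candidates run from a degree-$0$ piece $C(s)$ with $s\le r-3$ to the degree-$1$ piece $(\Omega^1(\A^n)/\kker d)(r-2)$. Such a map is $\G_K$-equivariant, hence given by a $\G_K$-invariant vector of $(\Omega^1(\A^n)/\kker d)(r-2-s)$ with $r-2-s\ge1$. Now $\Omega^1(\A^n)$ and $\kker d=d\,\O(\A^n)$ are, as $\G_K$-modules, completed sums of copies of $C$ (the action is on the coefficients, trivial on the $T_m$), so both $C(j)^{\G_K}$ and $H^1(\G_K,C(j))$ vanish for $j\ne0$ by Tate; the long exact sequence of $0\to\kker d\to\Omega^1(\A^n)\to\Omega^1(\A^n)/\kker d\to0$ then gives $(\Omega^1(\A^n)/\kker d)(j)^{\G_K}=0$ for $j\ne0$, and the candidate differentials vanish. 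Everything else is bookkeeping of the graded pieces.
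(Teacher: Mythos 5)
Your proof is correct, but it follows a genuinely different route from the paper's. The paper computes the cohomology of each ${\rm DR}_r(R_{k})$ directly, obtaining for $1\leq i\leq r-1$ a short exact sequence $0\to {\rm A}_{r-i}\otimes_{\acris}H^i_{\rm dR}(R_{k})\to {\rm DR}_r^i(R_{k})\to \big(\Omega^i(D_{k})/\kker d\big)(r-i-1)\to 0$, and then kills the left-hand term in the limit over $k$ by invoking Lemma~\ref{AFF3} (the transition maps on $H^i_{\rm dR}(R_{k})$ are identically zero); no spectral sequence and no degeneration argument are needed. You instead filter by $F^{\scriptscriptstyle\bullet}_\theta$, pass to $\holim_\alpha$ on the graded pieces --- where only the de Rham cohomology of $\A^n$ itself appears, which vanishes classically --- and then must prove degeneration of the resulting spectral sequence. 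Your approach has the merit of never touching the de Rham cohomology of the individual lifts $R_{k}$ (which the paper itself describes as ``quite nasty'') and of bypassing Lemma~\ref{AFF3} for the ${\rm DR}$ side; the price is the degeneration step, which you settle correctly by $\G_K$-equivariance and Tate twists (the differentials are indeed $C$-linear and $\G_K$-equivariant since $d$ is $\acris$-linear and the filtration is Galois-stable). Two small remarks: for the vanishing of $\big((\Omega^1(\A^n)/\kker d)(j)\big)^{\G_K}$, $j\neq 0$, it is cleaner to embed $\Omega^1(\A^n)/\kker d$ into $\Omega^2(\A^n)$ via $d$ and apply Tate's theorem coefficientwise, rather than going through $H^1(\G_K,\kker d(j))$ for a Fr\'echet space, which requires some care with continuous cochains; and in your degree-$0$ computation the reference to Lemma~\ref{aff5} is superfluous --- the condition $k_j c_{\bf k}\in F^{r-1}_\theta{\rm A}_r$ for some $k_j\neq 0$ is immediate and directly forces $c_{\bf k}$ into the top graded piece $C(r-1)$.
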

\begin{proof}
We have an exact sequence 
$$0\to {\rm R}^1\invlim_k {\rm DR}_r^{i-1}(R_{k})\to
{\rm DR}_r^i(\A^n)\to \invlim_k {\rm DR}_r^{i}(R_{k})\to 0
$$
The ${\rm DR}_r^i(R_{k})$'s are the cohomology groups of the complex 
$$\xymatrix{\dots\ar[r]&{\rm A}_{r-i}\otimes_{\acris}\Omega^i_{R_{k}}\ar[r]^-{1\otimes d_i}
&{\rm A}_{r-i-1}\otimes_{\acris}\Omega^{i+1}_{R_{k}}\ar[r]&\cdots}$$
In particular, they are trivially $0$ if $i\geq r$, so assume $i\leq r-1$.
The kernel of $1\otimes d_i$ is $F^{r-i-1}_{\theta}{\rm A}_{r-i}\otimes_{\acris}\Omega^i_{R_{k}}+
{\rm A}_{r-i}\otimes_{\acris}(\Omega^i_{R_{k}})_{d=0}$ while the image
of $1\otimes d_{i-1}$ is ${\rm A}_{r-i}\otimes_{\acris}d\Omega^{i-1}_{R_{k}}$.
Since $F^{r-i-1}_{\theta}{\rm A}_{r-i}$ is an $\O_C$-module of rank~$1$ (generated by the image of
$\frac{(p-[\tilde p])^{r-i-1}}{(r-i-1)!})$, we have $F^{r-i-1}_{\theta}{\rm A}_{r-i}\otimes_{\acris}\Omega^i_{R_{k}}\simeq
\Omega^i(D_{k})(r-i-1)$, which gives us the exact sequence
$$
0\to {\rm A}_{r-i}\otimes_{\acris}H^i_{\rm dR}(R_{k})\to {\rm DR}_r^i(R_{k})\to  
\big(\Omega^i(D_{k})/\kker d\big)(r-i-1)\to 0.
$$
  For $i=0$ this gives the sequence in the lemma. 

Assume that $i\geq 1$. 
The natural map
$H^i_{\rm dR}(R_{{k}+1})\to H^i_{\rm dR}(R_{k})$ is identically zero by Lemma~\ref{AFF3}.
Hence $${\rm R}^j\invlim_k(\Omega^i(D_k)/\kker d)\simeq{\rm R}^j\invlim_k {\rm DR}_r^{i}(R_{k}), \quad j\geq 0.
$$
Now, we note that since our systems are indexed by $\N$, ${\rm R}^j\varprojlim_k$ is trivial for $j\geq 2$. 
Since $ {\rm R}^1\varprojlim_k\Omega^i(D_k)=0$, we have 
${\rm R}^1\varprojlim_k(\Omega^i(D_k)/\kker d)=0$ (and  ${\rm R}^1\varprojlim_kd\Omega^{i}=0$).
It remains to show that
$\varprojlim_k(\Omega^i(D_k)/\kker d)\simeq \Omega^i(\A^n)/\kker d$. But this amounts to showing that
${\rm R}^1\varprojlim_k\Omega^{i}(D_k)_{d=0}=0$.
This is clear for $i=0$ and for $i>0$, since the system $\{H^{i}_{\rm dR}(R_k)\}_{k\in\N}$ 
is trivial (by Lemma \ref{AFF3}), this follows from the fact that ${\rm R}^1\varprojlim_kd\Omega^{i-1}(D_k)=0$.
\end{proof}
\section{Proof of Theorem \ref{aff1} and Theorem \ref{aff11}}
\subsection{Algebraic isomorphism}
From Proposition \ref{aff3} we know that 
$ \tau_{\leq r}{\rm Syn}(\A^n,r)\simeq \tau_{\leq r}\rg_{\proeet}(\A^n,\Q_p(r))$. 
From the 
long exact sequence
$$\cdots\to {\rm DR}_r^{i-1}(\A^n)\to {\rm Syn}^i(\A^n,r)\to {\rm HK}_r^i(\A_n)\to
{\rm DR}_r^{i}(\A^n)\to\cdots$$
and
Corollary \ref{AFF4} and Lemma \ref{AFF5}, we obtain isomorphisms 
$$
\big(\Omega^{i-1}(\A^n)/\kker d\big)(r-i)\stackrel{\sim}{\to}{\rm Syn}^i(\A^n,r), \quad r\geq i \geq 2,
$$
and 
the exact sequence
$$
0\to {\rm Syn}^0(\A^n,r)\to {\rm B}^{+,\varphi=p^r}_{\rm cris}\to {\rm DR}_r^0(\A^n)\to {\rm Syn}^1(\A^n,r)\to 0,
$$
which,
using 
 the fundamental exact sequence 
$$0\to \Q_p(r)\to {\rm B}^{+,\varphi=p^r}_{\rm cris}\to  {\rm B}^+_{\rm cris}/F^r_{\theta}\to 0,  $$
 proves the first isomorphism in Theorem~\ref{aff1} (together with ${\rm Syn}^0(\A^n,r)\cong \Q_p(r)$).  
The second isomorphism is an immediate consequence of the fact that $H^i_{\dr}(\A^n)=0$.

Since an open ball is an increasing union of closed balls,
Theorem \ref{aff11} is proved by the same argument (see Remark~\ref{AFF3.1}).

  \begin{rema}\label{aff4}
{\rm (i)}
Let $j \in \N$. We note that, since $[\tilde{p}]^{p}\in p\acris$,
for every $\alpha\in\Q_+$, the maps \footnote{The subscript $j$ refers to moding out by $p^j $.} 
$\Omega^i(R^+_{\alpha+m})_j \to \Omega^i(R^+_{\alpha})_j$, $m\geq pj$,  are the zero maps for  
$i\geq 1$ and the projection on the constant term for $i=0$. 
It follows that
\begin{align*}
\holim_{k}{\rm HK}_r(R^+_{k})_j  \simeq ({\rm A}_{{\rm cris},j }\lomapr{\varphi-p^r}{\rm A}_{{\rm cris},j}),\quad 
\holim_{k}{\rm DR}_r(R^+_{k})_j   \simeq ({\rm A}_{{\rm cris}}/F^r_{\theta})_j.
\end{align*}
Computing as above we get
$
(\holim_{k,\ell }{\rm Syn}(R^+_{k},r)_j)\otimes \Q\simeq \Q_p(r).
$
Hence, by the comparison isomorphism (\ref{comp1}), 
$H^i_{\eet}(\A^n,\Q_p(r))=0$, $i\geq 1$,
which allows us to recover the result of Berkovich~\cite{Ber}.

{\rm (ii)} The above argument does not go through for the open unit ball: the integral
de Rham complex does not reduce to the constants in that case
 and $H^i_{\eet}(\ocirc{\bf B}^n,\Q_p(r))$
is an infinite dimensionnal $\Q_p$-vector space if $1\leq i\leq n$.
\end{rema}

 \subsection{Topological considerations}
 It remains to discuss topology. 
In what follows, we write $\cong$ for an isomorphism of vector spaces and $\equiv$ for
an isomorphism of topological vector spaces.

First, note that all the cohomology groups under consideration are cohomology groups
of complexes of Fr\'echet spaces (and even of finite sums of countable products of Banach spaces),
since these complexes 
can be built out of \v{C}ech complexes coming from coverings by affinoids, and
the corresponding complexes for affinoids
involve finitely many Banach spaces.  It follows that, a priori, all the groups
we are dealing with are cokernels of maps $F_1\to F_2$ between Fr\'echet spaces.
If such a group
injects continuously into a Fr\'echet space, then it is a Fr\'echet space (it is separated
hence the image of $F_1$ in $F_2$ is closed, and our space is a quotient of a Fr\'echet
space by a closed subspace), and if this
injection is a bijection then it is an isomorphism of Fr\'echet spaces by the
Open Mapping Theorem.

Now, we have the following commutative diagram:
$$\xymatrix@R=.6cm{H^r_{\proeet}(\A^n,\Q_p(r))\ar[r]\ar[d]^{\cong}
&\varprojlim_kH^r_{\eet}(D_k,\Q_p(r))\ar[d]^{\equiv}\\
{\rm Syn}^r(\A^n,r)\ar[r]^-{\cong}&\varprojlim_k{\rm Syn}^r(R_k,r)}$$
The horizontal maps are the natural maps (and are continuous), 
the bottom one being an isomorphism
by the earlier computations.
The left vertical arrow is an isomorphism by Proposition \ref{aff3} and 
 the right vertical arrow is a topological isomorphism
because the period maps (\ref{comp1}) are $p^N$-quasi-isomorphisms, with $N$ depending only on $r$.
Thus proving that $\varprojlim_k{\rm Syn}^r(R_k,r)$ is Fr\'echet would imply
that so is $H^r_{\proeet}(\A^n,\Q_p(r))$ and that
$H^r_{\proeet}(\A^n,\Q_p(r))\equiv \varprojlim_k{\rm Syn}^r(R_k,r)$.

 For that, consider the  map of distinguished triangles
   $$
 \xymatrix@R=.6cm{
  {\rm Syn}(R_k,r)\ar[r]\ar[d]^{\alpha} & {\rm HK}_r(R_k)\ar[r] \ar[d]^{\beta}&   {\rm DR}_r(R_k)\ar[d]^{\gamma}\\
\Omega^{\geq r}(D_k)[-r] \ar[r] &   \Omega\kr(D_k)\ar[r] &   \Omega^{\leq r-1}(D_k)
}
$$
in which:

$\bullet$ the top line is the definition of ${\rm Syn}(R_k,r)$, the bottom one is the obvious one,

$\bullet$ $\gamma$ is obtained by applying $\theta$ to the terms of the complex ${\rm DR}_r(R_k)$,

$\bullet$ $\beta$ is obtained by composing the natural map ${\rm HK}_r(R_k)\to \Omega\kr_{R_k}$
with $\theta$,

$\bullet$ $\alpha$ is obtained by composing the natural map
${\rm Syn}(R_k,r)\to F^r\Omega\kr_{R_k}$
with $\theta$.

All the maps  are continuous (including the boundary maps). For $r\geq 2$, taking cohomology and limits we obtain the commutative diagram
$$
\xymatrix@R=.6cm{
 \varprojlim_k{\rm DR}_r^{r-1}(R_k) \ar[r]^{\partial}_{\cong}\ar[d]^{\cong} & \varprojlim_k{\rm Syn}^r(R_k,r)\ar[d] \\
\Omega^{r-1}(\A^n)/\kker d\equiv \varprojlim_k(\Omega^{r-1}(D_k)/\kker d) \ar[r]^-{d}_-{\cong} & \varprojlim_k\Omega^r(D_k)^{d=0}\equiv \Omega^r(\A^n)^{d=0}
}
$$
The bottom map is an isomorphism because $\varprojlim_kH^r_{\dr}(D_k)\simeq H^r_{\dr}({\mathbf A}^n)=0$. 
The top map is an isomorphism because, on level $k$,  
its kernel and cokernel  are controlled by $ {\rm HK}^{r-1}_r(R_k)$ and $ {\rm HK}^{r}_r(R_k)$ 
respectively, which die in $R_{k-2}$ by Lemma \ref{AFF3}, and the left vertical map
is an isomorphism by the proof of Lemma~\ref{AFF5}.
The space $\Omega^r({\bf A}^n)$ is Fr\'echet; it follows that all other spaces are
also Fr\'echet (in particular $\varprojlim_k{\rm Syn}^r(R_k,r)$)
and that all the maps are topological isomorphisms.
This concludes the proof of Theorem~\ref{aff1} if $r\geq 2$.

 For $r=1$, the argument is similar, with $\varprojlim_k{\rm DR}_r^{r-1}(R_k)$ in the above diagram replaced by $(\varprojlim_k{\rm DR}_r^{r-1}(R_k))/C$.

 The proof for the open ball is similar.

\bigskip

\end{document}